\documentclass[preprint]{elsarticle}
\usepackage{hyperref}
\usepackage{amsmath,amsthm}
\usepackage{amssymb}
\usepackage{tikz}
\usepackage{vmargin}

\newtheorem{thm}{Theorem}

\newtheorem{Prop}[thm]{Proposition}
\newtheorem{lemma}[thm]{Lemma}
\newtheorem{corollary}[thm]{Corollary}

\journal{Discrete Mathematics}
\begin{document}
\begin{frontmatter}

\title{Locally identifying colourings for graphs
with given maximum degree\tnoteref{t1}}

 \tnotetext[t1]{This research is
    supported by the ANR Project IDEA {\scriptsize $\bullet$} {ANR-08-EMER-007},  2009-2011.}

\author[bdx]{Florent Foucaud}
\author[tku]{Iiro Honkala}
\author[tku]{Tero Laihonen}
\author[gre]{Aline Parreau}
\ead{aline.parreau@ujf-grenoble.fr}
\author[cat]{Guillem Perarnau}

\address[bdx]{Universit\'e de Bordeaux, LaBRI, 351 
cours de la Lib\'eration, 33405 {Talence}, France}
\address[tku]{Department of Mathematics, University of Turku, 20014 {Turku}, Finland}
\address[gre]{Institut Fourier, 100 rue des Maths, BP 74,
38402 {Saint-Martin d'H\`eres}, France}
\address[cat]{COMBGRAPH, Universitat Polit\`ecnica de Catalunya, Spain}

\begin{abstract}
A proper vertex-colouring of a graph $G$ is said to be locally identifying if for any pair $u$,$v$ of adjacent vertices with distinct closed neighbourhoods, the sets of colours in the closed neighbourhoods of $u$ and $v$ are different. We show that any graph $G$ has a locally identifying colouring with $2\Delta^2-3\Delta+3$ colours, where $\Delta$ is the maximum degree of $G$, answering in a positive way a question asked by Esperet {\em et al}. We also provide similar results for locally identifying colourings which have the property that the colours in the neighbourhood of each vertex
are all different and apply our method to the class of chordal graphs.
\end{abstract}

\begin{keyword}
Graph colouring \sep identification \sep maximum degree \sep chordal graphs
\end{keyword}
\end{frontmatter}

\section{Introduction}
Let $G=(V,E)$ be a simple undirected finite graph. Let $c: V \to \mathbb{N}$ be a colouring of the vertices of $G$. For a subset $S$ of $V$, we denote by $c(S)$ the set of colours that appear in $S$: $c(S)=\{c(u)~|~u\in S\}$ and we denote by $N(u)$ (resp. $N[u]$) the open (resp. closed) neighbourhood of $u$: $N(u)=\{v \in V~|~uv\in E\}$ (resp. $N[u)]=N(u)\cup\{u\}$).

The colouring $c$ is a {\em locally identifying colouring} ({\em lid-colouring for short})
 if it is a proper colouring (no two adjacent vertices have the same colour) such that for each pair of
adjacent vertices $u$,$v$ with $N[u]\neq N[v]$, we have $c(N[u]) \neq c(N[v])$. A colour belonging to the symmetric difference of $c(N[u])$ and $c(N[v])$ is said to {\em separate} $u$ from $v$.
An edge $uv$ is said to be {\em bad} if $N[u]\neq N[v]$ and $c(N[u]) = c(N[v])$. So a locally
identifying colouring is a proper vertex colouring without any bad edge.
The {\em locally identifying chromatic number} of $G$, denoted by $\chi_{lid}(G)$, is the minimum number of
colours required in any locally identifying colouring of $G$.

Locally identifying colourings have been introduced in \cite{EGMOP10} and are related to  identifying codes \cite{KCL98,lobs}, distinguishing colourings \cite{BRS03,BS97,CHS96} and locating-colourings \cite{CEHSZ02}.
An open question asked in \cite{EGMOP10} was to know whether one can find a locally identifying colouring of a graph $G$ with $O(\Delta^2)$ colours, where $\Delta$ is the maximum degree of $G$.
Examples using the projective plane provide graphs $G$ with $\chi_{lid}(G)= \Delta^2-\Delta+1$ (see \cite{EGMOP10}). In this
note, we show that we always have $\chi_{lid}(G)\leq 2\Delta^2-3\Delta+3$, answering in a positive way the question
asked in \cite{EGMOP10}. The result is effective: we give a construction for a locally identifying colouring
with $2\Delta^2-3\Delta+3$ colours. This construction can be slightly modified, using
$2\Delta^2-\Delta+1$ colours to provide a locally identifying colouring which has the property that the colours in the neighbourhood of each vertex are all distinct. We finally consider the class of chordal graphs, for which it is conjectured in \cite{EGMOP10} that $\chi_{lid}(G)\leq 2\chi(G)$, for any chordal graph $G$. We give a bound for this class in terms of $\Delta$ and $\chi$, in the direction of the previous conjecture.
For terminology and notations of graph theory, we refer to the book \cite{BM08}.

\section{Upper bound in terms of the maximum degree}

The following lemma shows that, given a locally identifying colouring,
we can change the colour of a single vertex in a number of ways,
without  sacrificing the property that the colouring
is locally identifying.

\begin{lemma}[Recolouring Lemma]\label{lem:rec}
Let $G$ be a graph with maximum degree $\Delta\geq 3$. Let $v$ be a vertex of degree $d$.
Assume that $G$ has a locally identifying colouring $c$ with strictly more than $2d(\Delta-1)$ colours. Then, there is a list $L$ of colours of size at most $2d(\Delta-1)$ such that if we colour $v$ with a colour not in $L$, the colouring remains locally identifying.
\end{lemma}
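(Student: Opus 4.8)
The plan is to build $L$ as a union $L=\bigcup_{u\in N(v)}A_u$ of $d$ sets, each of size at most $2(\Delta-1)$, where $A_u$ collects the colours that must be denied to $v$ ``on behalf of'' the neighbour $u$. First I would isolate the only edges that can be harmed: recolouring $v$ alone changes $c(N[x])$ only when $v\in N[x]$, i.e.\ $x\in N[v]$, so the only candidates for new bad edges are the edges $uv$ with $u\in N(v)$ and the edges $uw$ with $u\in N(v)$, $w\neq v$. For such a neighbour $u$ write $F_u:=c(N[u]\setminus\{v\})$; the elementary identities $c'(N[u])=F_u\cup\{\alpha\}$ and $c'(N[v])=c(N(v))\cup\{\alpha\}$, where $c'$ is the colouring obtained by recolouring $v$ with $\alpha$, drive everything. (Note the hypothesis on the number of colours is not needed to build $L$; it only guarantees afterwards that some colour avoids $L$.)

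Next I would determine, for each of these edges, the set of \emph{dangerous} colours $\alpha$ that make it bad. For the edge $uv$ (when $N[u]\neq N[v]$): badness means $F_u\cup\{\alpha\}=c(N(v))\cup\{\alpha\}$, i.e.\ $F_u\setminus\{\alpha\}=c(N(v))$; since $c$ is already locally identifying we have $F_u\setminus\{c(v)\}\neq c(N(v))$, which leaves at most one dangerous colour, namely the unique element of $F_u\setminus c(N(v))$ when that set is a singleton (and none otherwise), and this colour lies in $F_u$. For an edge $uw$ with $w\in N(v)$: badness means $F_u\setminus\{\alpha\}=F_w\setminus\{\alpha\}$, and since $F_u\neq F_w$ (again because $c$ is locally identifying) there is at most one dangerous colour, the single element of the symmetric difference $F_u\triangle F_w$. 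For an edge $uw$ with $w\notin N[v]$: only $c(N[u])$ moves, so badness means $F_u\cup\{\alpha\}=c(N[w])$; if $c(N[w])=F_u\cup\{\beta\}$ for a single $\beta\notin F_u$ there is one dangerous colour $\beta$, if $|c(N[w])\setminus F_u|\geq 2$ or $F_u\not\subseteq c(N[w])$ there is none, and otherwise $c(N[w])=F_u$, a \emph{degenerate} edge, for which \emph{every} colour of $F_u$ is dangerous.

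Finally I would set $A_u$ to be $\{c(u)\}$ together with the (at most one) dangerous colour of each non-degenerate edge at $u$ — charging each edge inside $N(v)$ to just one endpoint — and, whenever $u$ carries at least one degenerate edge, the whole set $F_u$; the latter neutralises all degenerate edges at $u$ simultaneously and absorbs $c(u)$ and the dangerous colour of $uv$, which both lie in $F_u$. The key estimate is that $|A_u|\leq 2(\Delta-1)$ always: if $u$ has no degenerate edge then $|A_u|\leq 1+\deg u\leq \Delta+1\leq 2(\Delta-1)$, using $\Delta\geq 3$; if $u$ has a degenerate edge then that edge occupies one of the $\deg u-1$ edges at $u$ other than $uv$, so at most $\deg u-2$ single-colour contributions survive outside $F_u$, whence $|A_u|\leq |F_u|+(\deg u-2)\leq \deg u+(\deg u-2)=2(\deg u-1)\leq 2(\Delta-1)$. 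Summing over the $d$ neighbours gives $|L|\leq 2d(\Delta-1)$, and by construction any $\alpha\notin L$ differs from every $c(u)$ (properness) and leaves every incident edge non-bad, while edges with no endpoint in $N[v]$ are untouched, so the recoloured colouring is still locally identifying. I expect the degenerate case to be the real obstacle: there no single colour suffices, one is forced to forbid all of $F_u$, and the whole bound survives only because such an edge consumes a slot at $u$; this, together with the use of $\Delta\geq 3$ to absorb the $\deg u+1$ bound in the generic case, is the heart of the argument.
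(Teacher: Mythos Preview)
Your argument is correct and essentially mirrors the paper's: both build $L=\bigcup_{u\in N(v)} A_u$ with $|A_u|\le 2(\Delta-1)$, both split into a degenerate case (the paper's ``type~A'', where some $w\notin N[v]$ has $c(N[w])=F_u$ and all of $F_u$ must be forbidden) versus a generic case (``type~B'') bounded by $\Delta+1\le 2(\Delta-1)$ via $\Delta\ge 3$, and the paper's device of reserving a surviving \emph{separating} colour for each edge is just the dual of your unique \emph{dangerous} colour. One harmless slip: the dangerous colour for the edge $uv$ need not lie in $F_u$---it could be the single element of $c(N(v))\setminus F_u$ when the symmetric difference sits on that side---but then it equals some $c(v_j)$ already forced into $L$ through $A_{v_j}$, so nothing breaks.
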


\begin{proof}
Let $v_1,...,v_d$ be the neighbours of $v$.
For each vertex $v_i$, let $u_{i,1},...,u_{i,s_i}$ be the neighbours of $v_i$ that are not
neighbours of $v$, see Figure~\ref{fig:notation}. For $1\leq i \leq d$, we construct a list $L_i$ of colours with at most $2(\Delta-1)$ colours. We first put $c(v_i)$ in $L_i$.

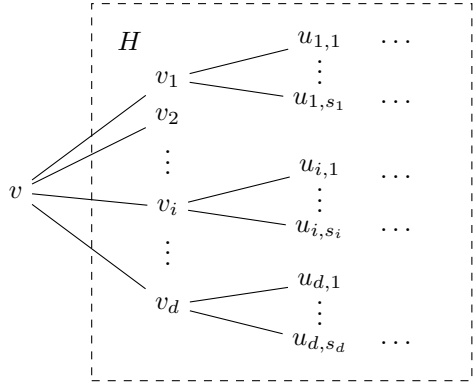
\begin{figure}[ht]
\begin{center}
\begin{tikzpicture}
\node(v) at (-1,0.5) {$v$};
\node(v1) at (1,2) {$v_1$};
\node(v2) at (1,1.5) {$v_2$};
\node at (1,1) {$\vdots$};
\node(vi) at (1,0.3) {$v_i$};
\node at (1,-0.2) {$\vdots$};
\node(vd) at (1,-1) {$ v_d$};

\node(u11) at (3,2.5) {$u_{1,1}$};
\node at (3,2.2) {$\vdots$};
\node(u12) at (3,1.7) {$u_{1,s_1}$};

\node(ui1) at (3,0.8) {$u_{i,1}$};
\node at (3,0.5) {$\vdots$};
\node(ui2) at (3,0) {$u_{i,s_i}$};

\node(ud1) at (3,-0.7) {$u_{d,1}$};
\node at (3,-1) {$\vdots$};
\node(ud2) at (3,-1.5) {$u_{d,s_d}$};

\node(u1) at (4,2.5) {$\ldots$};
\node(u1) at (4,1.7) {$\ldots$};
\node(u1) at (4,0) {$\ldots$};
\node(u1) at (4,-1.5) {$\ldots$};
\node(u1) at (4,0.7) {$\ldots$};

\draw[-] (v)--(v1);
\draw[-] (v)--(v2);
\draw[-] (v)--(vi);
\draw[-] (v)--(vd);
\draw[-] (v1)--(u11);
\draw[-] (v1)--(u12);
\draw[-] (vi)--(ui1);
\draw[-] (vi)--(ui2);
\draw[-] (vd)--(ud1);
\draw[-] (vd)--(ud2);
\draw[dashed] (5,3) rectangle (0,-2);
\node at (0.5,2.5) {$H$};
\end{tikzpicture}
\end{center}
\caption{\label{fig:notation} Neighbourhood of a vertex $v$}
\end{figure}

If there is a vertex $u_{i,j}$ such that $c(N[u_{i,j}])=c(N[v_i]\setminus \{v\})$, (in other words, $u_{i,j}$ is separated from $u_i$ only by the colour of $v$), then we say that $v_i$ is {\em of type $A$} and
we add to $L_i$ all the colours of $c(N[v_i]\setminus \{v\})$. We add at this point at most
$\Delta-1$ colours because $c(v_i)$ is already in $L_i$.
Then, for all vertices $u_{i,j'}$ such that $c(N[u_{i,j'}])\setminus c(N[v_i]\setminus \{v\})$ is
not empty, we add an arbitrary colour of  $c(N[u_{i,j'}])\setminus c(N[v_i]\setminus \{v\})$ to $L_i$. In
this step, we add at most $\Delta-2$ colours because $j'\neq j$. Therefore, in the end $|L_i|\leq
2(\Delta-1)$.

Otherwise, we say that $v_i$ is {\em of type $B$} and for each neighbour $u\neq v$ of $v_i$, if $c(N[u]\setminus \{v\})\setminus c(N[v_i]\setminus \{v\})$ is not empty,
we add one colour of this set to $L_i$. Note that $u$ can be some other vertex $v_j$ or some vertex
$u_{i,j}$, but there are at most $\Delta-1$ such vertices. If $c(N[v_i])\setminus c(N[v])$ is not empty, we add one colour of this set to $L_i$. In the end,  $|L_i|\leq \Delta+1$.

Let $L=\cup_{i=1,\ldots,d} L_i$. Because $\Delta \geq 3$, $|L|\leq 2d(\Delta -1)$.
We define a new colouring $c'$ of $G$ by just giving to $v$ any colour not in $L$. We
will prove that $c'$ is locally identifying.
First, $c'$ is a proper colouring because $L$ contains all the colours $c(v_1)$,...,$c(v_d)$.
Let now $x,y$ be a pair of adjacent vertices, with $N[x]\neq N[y]$. We will show that $c'(N[x])\neq
c'(N[y])$. If neither $x$ nor $y$ are in $N[v]$ then the colours in their respective neighbourhoods
did not change and we have $c'(N[x])\neq c'(N[y])$.
So we can assume, without loss of generality, that $x=v_1$.

Suppose first that $y=v$. Let us also assume that there exists a colour 
$c_0 \in c(N[v_1])\setminus c(N[v])$, then $c_0\neq c(v)$. If $v_1$ is
of type $A$, then $c_0\in L$ and $c_0 \in c'(N[v_1])\setminus c'(N[v])$. Hence $c_0$ is still separating
$v$ from $v_1$. 
If $v_1$ is of type $B$, then at least one colour of  
$c(N[v_1])\setminus c(N[v])$ is in $L$ (not necessarily $c_0$), and is separating $v_1$ from $v$.
Now, assume that $c(N[v_1])\setminus c(N[v])$ is empty. Because $c(N[v_1])\neq c(N[v])$, there is a colour $c_0 \in c(N[v])\setminus c(N[v_1])$. We have $c_0\neq c(v)$, so $c_0\in L$ and $c_0$ is still separating $v$ from $v_1$.

Assume now that $y=v_j$, with $j\neq 1$ and $v_1$,$v_j$ adjacent. Then without loss of generality we can assume that there exists a colour $c_0$ in $c(N[v_1])$ that does not appear in $c(N[v_j])$. Necessarily, $c_0\neq c(v)$ and so $c_0 \in
c(N[v_1]\setminus\{v\})\setminus c(N[v_j]\setminus\{v\})$. If $v_1$ is of type $A$, then $c_0$ is not the new
colour $c'(v)$ of $v$ because $c_0\in L$. Otherwise, $v_1$ is of type $B$. 
If $c(N[v_j]\setminus \{v\})\setminus c(N[v_1]\setminus\{v\})$ is nonempty, then there is one
colour  in $L$ of $c(N[v_j]\setminus \{v\})\setminus c(N[v_1]\setminus\{v\})$ that is separating $v_j$ from $v_1$. Otherwise, if $v_j$ is of type $A$, we are done using a similar argument. If $v_j$ is of type $B$, then one colour of $c(N[v_1]\setminus\{v\})\setminus c(N[v_j]\setminus\{v\})$ is in $L$ and is separating the two vertices.

Finally, we can assume without loss of generality that $y=u_{1,1}$. If
$c(N[u_{1,1}])=c(N[v_1]\setminus \{v\})$, then $v_1$ is of type $A$ and so $c(N[u_{1,1}])\subseteq
L$. Hence, the new colour of $v$, $c'(v)$, is not in $c(N[u_{1,1}])$ and is separating $v_1$ from
$u_{1,1}$. If there is a colour in $c(N[v_1]\setminus \{v\})\setminus c(N[u_{1,1}])$, then it is
still separating $v_1$ from $u_{1,1}$. Otherwise $c(N[u_{1,1}])\setminus
c(N[v_1]\setminus \{v\})$ is necessarily nonempty, and so there is a colour of $L$ that is separating $u_{1,1}$
from $v_1$.
\end{proof}

Let $d$ be an integer. A graph $G$ is {\em $d$-degenerate} if each subgraph of $G$ has a vertex of degree at most $d$ (see \cite{LW70} for reference). 

\begin{Prop}\label{prop:coldeg}
Let $G$ be a $d$-degenerate connected graph with maximum degree $\Delta\geq 3$ and $d<\Delta$. Then:
$$\chi_{lid}(G)\leq 2(\Delta-1)^2+d.$$
\end{Prop}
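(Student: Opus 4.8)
The plan is to exploit $d$-degeneracy to build a vertex ordering and colour greedily from the end, applying the Recolouring Lemma at each step. Since $G$ is $d$-degenerate, there is an ordering $v_1, v_2, \ldots, v_n$ of $V(G)$ such that each $v_i$ has at most $d$ neighbours among $v_1, \ldots, v_{i-1}$; equivalently, processing vertices in the order $v_n, v_{n-1}, \ldots, v_1$, each newly coloured vertex $v_i$ has at most $d$ already-coloured "back-neighbours". First I would fix a palette of $2(\Delta-1)^2 + d$ colours and handle the base of the induction: the last few vertices in the ordering (or a small dense piece) can be coloured by hand or by a crude argument, since with this many colours one can always find a proper lid-colouring of a graph whose vertices have bounded degree — indeed any proper colouring with enough colours that all closed neighbourhoods receive pairwise distinct colour sets works, and $2(\Delta-1)^2+d$ comfortably exceeds the trivial bound needed there.

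The core is the inductive step. Suppose $v_{i+1}, \ldots, v_n$ have already been assigned colours forming a locally identifying colouring of the induced subgraph $G_i := G[\{v_{i+1}, \ldots, v_n\}]$ — but this is not quite the right statement, because adding $v_i$ back with its edges changes the closed neighbourhoods of its back-neighbours. The cleaner formulation is to colour in the forward order $v_1, \ldots, v_n$ and at step $i$ insist that the colouring restricted to $G_i := G[\{v_1,\ldots,v_i\}]$ is locally identifying; here $v_i$ is the newly added vertex and it has at most $d$ neighbours in $G_i$. To place $v_i$, I would first colour it arbitrarily (properly) and then apply the Recolouring Lemma to $v_i$ inside $G_i$: $v_i$ has degree at most $d$ in $G_i$, and $G_i$ has maximum degree at most $\Delta$, so the lemma produces a forbidden list $L$ of size at most $2d(\Delta-1)$, and we may recolour $v_i$ with any colour outside $L$ that keeps the colouring proper. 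We need the palette to exceed $2d(\Delta-1)$; since $d \le \Delta-1$ we have $2d(\Delta-1) \le 2(\Delta-1)^2 < 2(\Delta-1)^2 + d$, so a valid colour for $v_i$ always exists, completing the step.

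The subtlety — and the step I expect to be the main obstacle — is that the Recolouring Lemma as stated takes an existing locally identifying colouring and recolours one vertex; here I instead want to \emph{add} a vertex. So I would either (i) re-read the lemma's proof to confirm it applies verbatim when $v$ already has its final set of incident edges and we only vary $c(v)$ — which it does, since the lemma never uses that $v$ was previously coloured, only the structure of $N[v]$ and second neighbourhoods — or (ii) phrase the induction so that $v_i$ starts with \emph{some} proper colour in $G_i$ and the lemma is applied to change it. Concretely: $G_{i-1}$ has a locally identifying colouring by induction; extend it to $v_i$ by any proper colour (possible since $v_i$ has $\le d < \Delta$ neighbours and we have far more than $\Delta$ colours), obtaining a proper but possibly non-lid colouring of $G_i$; now the bad edges, if any, all lie in $N[v_i]$ or involve second-neighbours of $v_i$, which is exactly the configuration the Recolouring Lemma repairs. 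One must double-check that $\Delta \ge 3$ (given) and that $G_i$ inherits maximum degree $\le \Delta$ (immediate, it is a subgraph of $G$), and that $d \ge 1$ so the ordering makes sense for connected $G$ on $\ge 2$ vertices (if $n=1$ the statement is trivial). Assembling these observations gives $\chi_{lid}(G) \le 2(\Delta-1)^2 + d$.
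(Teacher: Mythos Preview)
Your approach has a genuine gap at the inductive step: recolouring the new vertex $v$ alone cannot always repair the edges $vv_j$. Observe that for any colour $c'(v)$ we have
\[
c'(N_{G_i}[v])=\{c'(v)\}\cup\{c(v_1),\dots,c(v_t)\},\qquad
c'(N_{G_i}[v_j])=\{c'(v)\}\cup c\bigl(N_{G_i}[v_j]\setminus\{v\}\bigr),
\]
so whether $vv_j$ is bad after recolouring depends only on the equality $\{c(v_1),\dots,c(v_t)\}=c(N_{G_{i-1}}[v_j])$, a condition on colours of vertices \emph{other} than $v$. If this equality holds while $N_{G_i}[v]\neq N_{G_i}[v_j]$, then \emph{no} choice of $c'(v)$ separates $v$ from $v_j$. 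This situation is easy to realise: let $v$ have back-neighbours $v_1,v_2$ (non-adjacent), let $v_1$ have a single further neighbour $u\notin N(v)$, and suppose the lid-colouring of $G_{i-1}$ happens to give $c(u)=c(v_2)$ (which is permitted, since $u$ and $v_2$ need not be adjacent). Then $c(N_{G_{i-1}}[v_1])=\{c(v_1),c(u)\}=\{c(v_1),c(v_2)\}$ and the edge $vv_1$ is bad for every colour of $v$. Your claim that ``the lemma never uses that $v$ was previously coloured'' is therefore incorrect: the lemma's treatment of the edges $vv_j$ explicitly invokes $c(N[v])\neq c(N[v_j])$, which comes from the hypothesis that $c$ is already lid.

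The paper's proof confronts exactly this obstruction and resolves it by recolouring the \emph{neighbours} of $v$ rather than $v$ itself. Working in $H=G\setminus\{v\}$ (where the colouring \emph{is} lid, so the Recolouring Lemma legitimately applies to each $v_j$, which has degree at most $\Delta-1$ there), it first forms a small list $L'$ containing one colour $c(u_{j,1})$ from a private neighbour of each $v_j$, and recolours every $v_j$ to avoid $L'$. After this preprocessing, assigning $v$ a brand-new colour automatically separates $v$ from each $v_j$: either $v_j$ has a neighbour $u_{j,1}$ whose colour lies in $L'$ and hence outside $c(N[v])$, or $N[v_j]\subseteq N[v]$, and minimality of $\deg(v)$ forces $N[v_j]=N[v]$. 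A final application of the Recolouring Lemma to $v$ itself then eliminates the extra colour. The missing ingredient in your plan is precisely this preparatory recolouring of the $v_j$'s.
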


\begin{proof}
Let $\Delta$ be fixed. We prove the claim by induction on the number of vertices in $G$. In the conditions of lemma, $2(\Delta-1)^2+d\geq 9$ so the claim
is true for graphs with less than nine vertices. Assume that the claim is true for every $d$-degenerate
graph with fewer than $n$ vertices and maximum degree at most $\Delta$.
Let $G$ be a $d$-degenerate graph with $n$ vertices and maximum degree at most $\Delta$.

Let $v$ be one vertex of minimum possible degree $t\leq d$ in $G$. Let $H=G\setminus\{v\}$.
The graph $H$ is also $d$-degenerate and by the induction hypothesis, there is a locally identifying colouring $c$ of $H$ with $2(\Delta-1)^2+d$ colours.
As in the previous lemma, we denote the $t$ neighbours of $v$ by $v_1$,...,$v_t$, (if $v$ has no
neighbours the claim is trivial) and for each $i\in\{1,...,t\}$, we denote by
$u_{i,1}$,...,$u_{i,s_i}$ the neighbours of $v_i$ that are not neighbours of $v$ (see Figure~\ref{fig:notation}). We construct a list $L'$ of size at most $d$ containing, for each $i\in
\{1,..,t\}$, the colour $c(u_{i,1})$ if $u_{i,1}$ exists.
Each vertex $v_i$ has degree  at most $\Delta-1$ in $H$. Using Lemma \ref{lem:rec}, we can choose for each vertex $v_i$ a colour that is not in $L'$: indeed, there are $2(\Delta-1)^2$ forbidden
colours from the lemma applied to $v_i$ and $d$ colours in $L'$, but the colours $c(u_{i,1})$, if $u_{i,1}$ exists, is
counted twice, so there are at most $2(\Delta-1)^2+d-1$ forbidden colours and at least one colour is free.

We can now assume that $c$ is a locally identifying colouring of $H$ such that no vertex $v_i$ has a colour in
$L'$. We now assign to $v$ a new colour, $c(v)$, never used in $c$ originally. We will prove that this colouring of
$G$ is locally identifying. It is clearly a proper colouring, and the only pair of adjacent vertices
that is not clearly separated is the pair $\{v,v_i\}$. If $v_i$ has a neighbour $u_{i,1}$, then
$c(u_{i,1})\in L'$ and $c(N[v])$ is not containing $c(u_{i,1})$, so they are separated.
Otherwise, we have $N[v_i]\subseteq N[v]$. But $v$ has minimum degree, so necessarily 
$N[v_i]= N[v]$ and the two vertices do not need to be separated.

Finally, we obtain a lid-colouring of $G$ using $2(\Delta-1)^2+d+1$ colours. Let $\alpha$ be the new colour of $v$. Recall that $\alpha$ is used only once. We want to apply the recolouring lemma to $v$ and avoid the colour $\alpha$. So $2(\Delta-1)^2+d-2d(\Delta-1)$ has to be greater than $0$ (the number of colours used in the colouring minus the number of forbidden colours of the recolouring lemma applied to $v$ minus the colour $\alpha$). This is a decreasing function linear in $d$ whose minimum is $\Delta-1$ (when $d=\Delta-1$) which is striclty greater than $0$, as $\Delta\geq 3$. This leads to a lid-colouring of the whole graph $G$ using at most $2(\Delta-1)^2+d$ colours.
\end{proof}

\begin{corollary}
Let $G$ be a graph with maximum degree $\Delta\geq 3$. Then: $$\chi_{lid}(G)\leq
2\Delta^2-3\Delta+3.$$
\end{corollary}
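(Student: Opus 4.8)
The plan is to derive the corollary directly from Proposition~\ref{prop:coldeg} by bounding the degeneracy of an arbitrary graph in terms of its maximum degree. Every graph $G$ with maximum degree $\Delta$ is trivially $\Delta$-degenerate, since every subgraph has all degrees at most $\Delta$; so one is tempted to plug $d=\Delta$ into the proposition. However, the proposition requires $d<\Delta$, so a small amount of care is needed: I would instead observe that it suffices to treat each connected component of $G$ separately (a lid-colouring of each component yields one of the whole graph, as there are no edges between components to create bad edges), and then split into cases according to whether a component is $\Delta$-regular or not.

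First I would handle a connected component $C$ that is \emph{not} $\Delta$-regular. Such a $C$ has a vertex of degree at most $\Delta-1$; moreover, since $C$ is connected, a standard argument shows that every subgraph of $C$ has a vertex of degree at most $\Delta-1$ — indeed, for a proper connected subgraph one can find a boundary vertex incident to an edge leaving the subgraph, which therefore has degree at most $\Delta-1$ inside it, and $C$ itself has such a vertex by assumption. Hence $C$ is $(\Delta-1)$-degenerate, and Proposition~\ref{prop:coldeg} with $d=\Delta-1$ gives $\chi_{lid}(C)\leq 2(\Delta-1)^2+(\Delta-1)=2\Delta^2-3\Delta+1\leq 2\Delta^2-3\Delta+3$.

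Next I would handle a connected component $C$ that \emph{is} $\Delta$-regular. The trick here is to delete one vertex $w$: the graph $C\setminus\{w\}$ is connected (or at worst its components are), has maximum degree $\Delta$, and is \emph{not} $\Delta$-regular, because every neighbour of $w$ now has degree $\Delta-1$. By the previous case (applied componentwise), $C\setminus\{w\}$ has a lid-colouring with $2\Delta^2-3\Delta+1$ colours. Now I would re-insert $w$ and colour it, essentially reusing the argument already carried out in the inductive step of Proposition~\ref{prop:coldeg}: build the small list $L'$ of colours $c(u_{i,1})$ over neighbours $v_i$ of $w$ (size at most $\Delta$), apply the Recolouring Lemma to each $v_i$ so that no $v_i$ gets a colour of $L'$ — here the colour budget is $2\Delta^2-3\Delta+1$ against $2(\Delta-1)^2$ forbidden colours from the lemma plus at most $\Delta$ from $L'$ with one overlap, so $2\Delta^2-3\Delta+1 > 2(\Delta-1)^2+\Delta-1$ must hold, which it does since $2\Delta^2-3\Delta+1-(2\Delta^2-3\Delta+1)=0$... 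I would double-check this inequality, using two fresh colours for $w$ and a final application of the Recolouring Lemma exactly as in the proposition's proof to absorb them, landing at $2\Delta^2-3\Delta+3$ colours.

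The main obstacle is the off-by-one bookkeeping around the $d<\Delta$ hypothesis: one cannot apply the proposition with $d=\Delta$, so the $\Delta$-regular case genuinely needs the vertex-deletion trick, and one must verify that after deletion the resulting graph (componentwise) really is non-regular and that the colour budget $2\Delta^2-3\Delta+1$ leaves enough room both for the per-neighbour recolourings and for the two auxiliary colours used when re-inserting $w$. I expect all these inequalities to hold with a small positive slack for $\Delta\geq 3$, mirroring the slack already exploited at the end of the proof of Proposition~\ref{prop:coldeg}, so the argument should go through cleanly once the cases are organised.
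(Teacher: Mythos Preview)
Your overall strategy is exactly the paper's: reduce to connected components, observe that a non-$\Delta$-regular connected graph is $(\Delta-1)$-degenerate so Proposition~\ref{prop:coldeg} gives $2\Delta^2-3\Delta+1$, and for a $\Delta$-regular component delete a vertex $w$, colour the rest, recolour the neighbours of $w$ via the list $L'$ as in the proof of Proposition~\ref{prop:coldeg}, and finally give $w$ a brand-new colour.

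The one place where your plan goes wrong is the bookkeeping you yourself flag. You write that $2\Delta^2-3\Delta+1>2(\Delta-1)^2+\Delta-1$ ``holds'' because the difference is $0$; but a strict inequality with difference $0$ does \emph{not} hold, so with only $2\Delta^2-3\Delta+1$ colours there is no free colour left when recolouring a neighbour $v_i$ of $w$. The paper's fix is simply to throw in \emph{one} extra unused colour before the recolouring step (so the palette has $2\Delta^2-3\Delta+2$ colours and there is exactly one colour free for each $v_i$), and then give $w$ one more brand-new colour, landing at $2\Delta^2-3\Delta+3$. That is the end of the argument: no absorption is performed.

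Your proposed ``final application of the Recolouring Lemma to absorb'' the fresh colours cannot work here and should be dropped. In the $\Delta$-regular case $w$ has degree $\Delta$, so Lemma~\ref{lem:rec} applied to $w$ forbids $2\Delta(\Delta-1)=2\Delta^2-2\Delta$ colours, which already exceeds $2\Delta^2-3\Delta+3$ for all $\Delta\ge 3$. The absorption trick at the end of Proposition~\ref{prop:coldeg} worked only because the deleted vertex there had degree at most $d<\Delta$; that slack is precisely what is lost in the regular case, and it is why the final bound is $2\Delta^2-3\Delta+3$ rather than $2\Delta^2-3\Delta+1$.
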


\begin{proof}
If $G$ is not connected, we colour the components independently. So we can assume that $G$ is connected.

If $G$ is not $\Delta$-regular, then $G$ is $(\Delta-1)$-degenerate. Indeed, if we take any proper subset $V'$ of vertices, and consider the subgraph induced by $V'$, then if
every vertex in this induced subgraph had degree $\Delta$, there would be
no edges between $V'$ and $V(G)\setminus V'$, and therefore $G$ would not be connected.
So in this case we can directly apply Proposition~\ref{prop:coldeg} and the result is clear.
%

Assume now that $G$ is $\Delta$-regular.
Let $v$ be any vertex of $G$. As before, the graph $G\setminus\{v\}$ is $(\Delta-1)$-degenerate and so, by Proposition~\ref{prop:coldeg}, it has a
lid-colouring with $2\Delta^2-3\Delta+1$ colours.
As in the proof of Proposition~\ref{prop:coldeg}, we can recolour all the neighbours of $v$ in such a way that the colouring remains
locally identifying and such that if a neighbour of $v$ has not the same closed neighbourhood as $v$, it has a neighbour
with a colour different from all the colours of $N(v)$. More precisely, to recolour a neighbour of $u$, Lemma \ref{lem:rec} forbidds $2(\Delta-1)^2=2\Delta^2-4\Delta+2$ colours and we forbidd also the colours of $N(v)\setminus \{v,u\}$, i.e. at most $\Delta-1$ more colours. So in total, there are
$2\Delta^2-3\Delta+1$ forbidden colours. To apply Lemma~\ref{lem:rec},  we have to start with a lid-colouring using $2\Delta^2-3\Delta+2$ colours which corresponds to the lid-colouring given by Proposition~\ref{prop:coldeg} plus an unused color. We finally get a lid-colouring of $G\setminus\{v\}$ with $2\Delta^2-3\Delta+2$ colours and the required properties on the neighbours of $v$. 
Then we assign a completely new colour to $v$ and as in the proof of Proposition~\ref{prop:coldeg}, we can show that the colouring is locally identifying, leading to a locally identifying colouring of the whole graph $G$ with $2\Delta^2-3\Delta+3$ colours .
\end{proof}

We now study the case $\Delta=2$:

\begin{Prop}
Let $n\geq 4$ be an integer. Let $\mathcal C_n$ be the cycle of order $n$. Then:
\begin{itemize}
\item $\chi_{lid}(\mathcal C_n)=3$ if $n\equiv 0 \bmod 4$,
\item $\chi_{lid}(\mathcal C_n)=5$ if $n=5$ or $7$,
\item $\chi_{lid}(\mathcal C_n)=4$ otherwise.
\end{itemize}

As a consequence, any graph with maximum degree 2 has a locally identifying colouring with five colours.
\end{Prop}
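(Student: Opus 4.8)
The plan is to start from an explicit description of the bad edges of a cycle. Write $c_i$ for the colour of the $i$-th vertex of $\mathcal C_n$ with $n\geq 4$ (so that $N[v_i]\neq N[v_{i+1}]$ always). For the edge $v_iv_{i+1}$ we have $c(N[v_i])=\{c_{i-1},c_i,c_{i+1}\}$ and $c(N[v_{i+1}])=\{c_i,c_{i+1},c_{i+2}\}$; since both sets contain $c_i\neq c_{i+1}$ and the colouring is proper, a two-line case analysis shows they are equal exactly when $c_{i-1}=c_{i+2}$, or else $c_{i-1}=c_{i+1}$ and $c_i=c_{i+2}$. Hence a proper colouring of $\mathcal C_n$ is locally identifying if and only if, for all $i$ (indices modulo $n$): (i) $c_i\neq c_{i+1}$; (ii) $c_i\neq c_{i+3}$; and (iii) it is never the case that both $c_i=c_{i+2}$ and $c_{i+1}=c_{i+3}$ — i.e.\ no four consecutive vertices receive an alternating pattern $abab$. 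For a path $P_n$ (the path on $n$ vertices) the interior edges obey the same rule, while the end edges $v_1v_2$ and $v_{n-1}v_n$ are bad exactly when $c_1=c_3$, respectively $c_n=c_{n-2}$.

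With this criterion, the upper bounds come down to explicit constructions. If $4\mid n$, the $4$-periodic colouring $1,2,3,2,1,2,3,2,\dots$ satisfies (i)--(iii) with $3$ colours. If $n\equiv 2\pmod 4$, give every even-indexed vertex colour $1$ and colour the odd-indexed vertices by a proper $3$-colouring (using colours $2,3,4$) of the odd cycle $\mathcal C_{n/2}$ they induce ``at distance two''; checking (i)--(iii) is immediate and this uses $4$ colours. If $n=5$ use five distinct colours, and if $n=7$ use $(1,2,1,3,4,5,4)$; both are readily verified against (i)--(iii). For odd $n\geq 9$ exhibit an explicit $4$-colouring, for instance a suitable number of copies of the block $(1,2,3,2)$ followed by a short ad hoc block chosen according to $n\bmod 4$ (e.g.\ $(4,1,3,1,4)$ for $n=9$), again checked against (i)--(iii). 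Finally, colouring the vertices of $P_n$ in order as $1,2,3,4,5,1,2,3,4,5,\dots$ visibly satisfies all three conditions, so $\chi_{lid}(P_n)\leq 5$. These match the lower bounds below.

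The real work is in the lower bounds. Clearly $\chi_{lid}(\mathcal C_n)\geq 3$, since the only proper $2$-colouring of an even cycle is $1,2,1,2,\dots$, which violates (iii) at every edge. To show $\chi_{lid}(\mathcal C_n)\geq 4$ whenever $4\nmid n$: in any locally identifying proper $3$-colouring there is some $i$ with $c_i=c_{i+2}$, because otherwise $c_{i-1},c_i,c_{i+1}$ are always three distinct colours, which forces $c_{i+2}$ to be the third colour — but $c_{i-1}$ is then that same third colour, so $c_{i-1}=c_{i+2}$, contradicting (ii). Once such an index exists, conditions (i), (ii) and (iii) force every colour in turn, making $c$ the periodic pattern $a,b,a,c,a,b,a,c,\dots$; since this must close up around the cycle, $4\mid n$. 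The two exceptional values are obtained by pushing the same idea a little further. For $\mathcal C_5$, conditions (i) and (ii) alone force all five colours distinct (any two vertices of $\mathcal C_5$ are at cyclic distance $1$ or $2$, and $2\equiv-3\pmod 5$), so $\chi_{lid}(\mathcal C_5)\geq 5$. For $\mathcal C_7$, a locally identifying $4$-colouring with no repetition at distance two is again forced to be $4$-periodic, impossible since $4\nmid 7$; and if there is such a repetition, the unrolling above — now with only four colours available — forces $c_5$ and $c_6$ and leaves exactly two possibilities for $c_7$, both of which create a forbidden $abab$ window, so $\chi_{lid}(\mathcal C_7)\geq 5$. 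I expect the main obstacle to lie precisely here: making these ``unrolling'' arguments airtight, i.e.\ verifying that the periodic pattern is the only possible completion and that the cyclic closure is consistent (and that the handful of special small cases, $\mathcal C_5$ and $\mathcal C_7$, behave as claimed).

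For the concluding assertion, any graph $G$ with $\Delta\leq 2$ is a disjoint union of paths and cycles; since distinct connected components impose no constraints on one another, a locally identifying colouring of $G$ can be assembled from locally identifying colourings of its components, reusing the same colours. By the assertions on cycles each cycle component of $G$ has a locally identifying colouring with at most $5$ colours, and by the construction above each path component has one with at most $5$ colours; hence $5$ colours suffice for $G$.
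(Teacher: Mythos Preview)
Your proof is correct and follows the same overall strategy as the paper---explicit constructions for the upper bounds, a forcing argument for the $3$-colour lower bound, and case analysis for $\mathcal C_5$ and $\mathcal C_7$---but you organise it differently. Your explicit criterion (i)--(iii) for a lid-colouring of a cycle is a genuine addition: the paper never isolates it, and it lets you argue the $\mathcal C_7$ lower bound cleanly (your claim that $c_5$ and $c_6$ are forced is indeed correct once the cyclic instances of~(ii), namely $c_5\neq c_1$ and $c_6\neq c_2$, are used). On the construction side the paper is more uniform: a single word family $[124341232][42](1232)^*$ covers every $n\neq 5,7$ at once, whereas you split by residue and leave the odd $n\equiv 3\pmod 4$ case as an unspecified ``ad hoc block''; this is easily filled (e.g.\ by the paper's length-$9$ prefix), but as written it is the one place your sketch is incomplete. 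Finally, the paper notes that paths admit a $4$-colour lid-colouring rather than the $5$-colour one you give, though your weaker bound still suffices for the concluding statement.
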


\begin{proof}

Let $v_0$,..., $v_{n-1}$ be the vertices of $\mathcal C_n$.
We clearly have $\chi_{lid}(\mathcal C_n)\geq 3$.

We colour $\mathcal C_n$ with four colours using  the following family of sequences described by the
following word: $$[124341232][42](1232)^*$$ for $n\geq 4$ and $n\neq 5,7$. A sequence in bracket,
$[M]$, means that we can take or not take the sequence $M$, the sequence $(M)^*$ means that we can repeat
sequence $M$ as many times as we need (or not use it at all). One can check that if we colour
vertices of $\mathcal C_n$ with one of the sequences described by the previous word, we obtain a
locally identifying colouring with three colours if $n\equiv 0 \bmod 4$ and with four colours otherwise.

If $n\not \equiv 0\bmod 4$, then there is no locally identifying colouring with three colours. Indeed, if we try to
colour the vertices of $\mathcal C_n$ with three colours there is no choice to do it and we must
colour, without loss of generality: $c(v_i)$ with colour $1$ if $i\equiv 0 \bmod 4$, with colour $2$
if $i\equiv 1,3\bmod 4$ and with colour $3$ if $i\equiv 2 \bmod 4$. But $v_{n-1}$ must have colour
$2$, and $v_{n-2}$ must have colour $3$. Then $n-2\equiv 2 \bmod 4$ and so $n\equiv 0 \bmod 4$, a
contradiction.

A case analysis shows that $\chi_{lid}(\mathcal C_5)=\chi_{lid}(\mathcal C_7)=5$.

For the last part of the proposition, if $G$ has maximum degree $2$ then it is composed of connected components that are cycles or paths. One can easily check that a path has always a locally identifying colouring with four colours, and so we can colour each
connected component  of $G$ independently with at most five colours.
\end{proof}

One can notice that in the locally identifying colourings of the cycle provided in the proof, only three colours are
used an unbounded number of times, whereas the other colours are used at most three times. In some sense, we can say that $\mathcal C_n$ has {\em almost} a locally identifying colouring with three colours.

\section{Strong locally identifying colourings}

In this section, we consider a variation of locally identifying colourings by adding a strong constraint to the definition. Our technique can still be applied in this context in order to get a bound in the same asymptotic order.
We also extend our method to the class of chordal graphs, in relation with a conjecture of \cite{EGMOP10}.

We say that a colouring $c$ is a  {\em strong locally identifying colouring} ({\em slid-colouring} for short)
if it  is a locally identifying colouring and if for each vertex $u$, all the colours in $N[u]$ are different (the colouring
is locally injective).
In other words, a slid-colouring is a proper distance-two vertex-colouring (two vertices at distance at most $2$ from each other have different colours) without bad edges.

We denote by $\chi_{slid}(G)$ the minimum number of colours required in any slid-colouring of $G$. Clearly, $\chi_{lid}(G)\leq \chi_{slid}(G)$ and $\chi_2(G)\leq \chi_{slid}(G)$, where $\chi_2(G)$ denotes the minimum number of colours in a distance-two vertex-colouring of $G$. In a graph $G$ with maximum degree $\Delta$, each vertex has at most $\Delta^2$ vertices at distance at most 2, and so $\chi_2(G)\leq \Delta^2+1$. 

We will adapt the proof of the previous section to show that
merging  the locally-identifying constraint with the distance-two colouring constraint does not increase the asymptotic order of both bounds.


\begin{lemma}[Recolouring lemma 2]\label{lem:rec2}
Let $v$ be a vertex of degree $d_1$ of a graph $G$. Assume that $v$ has $d_2$ vertices at distance
exactly $2$ and let $c$ be a slid-colouring $c$ with strictly more than $d_1+2d_2$ colours. Then, there is a list $L$ of colours of size at most $d_1+2d_2$ such that if  we colour $v$ with a colour not in $L$, the colouring remains a slid-colouring.
\end{lemma}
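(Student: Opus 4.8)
The plan is to mimic the proof of the Recolouring Lemma (Lemma~\ref{lem:rec}), exploiting the fact that in a slid-colouring every closed neighbourhood is rainbow. Write $v_1,\dots,v_{d_1}$ for the neighbours of $v$ and, as in Figure~\ref{fig:notation}, $u_{i,1},\dots,u_{i,s_i}$ for the neighbours of $v_i$ at distance $2$ from $v$; let $W$ be the set of the $d_2$ vertices at distance exactly $2$ from $v$. Recolouring $v$ changes $c(N[x])$ only for $x\in N[v]$, so the only adjacent pairs with $N[x]\ne N[y]$ whose separation could be destroyed are $\{v,v_i\}$, $\{v_i,v_j\}$ (when $v_iv_j\in E$) and $\{v_i,u_{i,j}\}$; likewise a failure of local injectivity can only occur between $v$ and a vertex at distance $1$ or $2$ from it.

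I would build $L$ from three parts: (a) the colours $c(v_1),\dots,c(v_{d_1})$; (b) the colours $c(w)$, $w\in W$; and (c), for every $w\in W$ and every neighbour $v_i$ of $w$ with $c(N[v_i]\setminus\{v\})\subseteq c(N[w])$ and $\bigl|c(N[w])\setminus c(N[v_i]\setminus\{v\})\bigr|=1$, the unique colour of that set difference. Note that (a)$\cup$(b) is exactly the set of colours used within distance $2$ of $v$, so $|\mathrm{(a)}\cup\mathrm{(b)}|\le d_1+d_2$, and in particular $c(N[v_i]\setminus\{v\})\subseteq L$ for every $i$. Recolouring $v$ with any $\alpha\notin L$ keeps the colouring proper and locally injective, by (a), (b) and the local injectivity of $c$; hence it is a distance-two colouring. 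The absence of bad edges is checked as in Lemma~\ref{lem:rec}, assuming one endpoint is $v_1$: for $\{v,v_1\}$ a separating colour of $c$ must be some $c(u_{1,j})$ if it lies in $c(N[v_1])\setminus c(N[v])$, or some $c(v_k)$ if it lies in $c(N[v])\setminus c(N[v_1])$ (using $c(v)\in c(N[v_1])\cap c(N[v])$ to rule out the other possibilities), and in both cases it lies in $L$ and survives; for $\{v_1,v_j\}$ a separating colour of $c$ is not $c(v)$, hence lies in $c(N[v_1]\setminus\{v\})\cup c(N[v_j]\setminus\{v\})\subseteq L$ and survives. For $\{v_1,w\}$ with $w=u_{1,j}\in W$ (here $N[v_1]\ne N[w]$ automatically, as $v\in N[v_1]\setminus N[w]$): if $c(N[v_1]\setminus\{v\})\not\subseteq c(N[w])$ a colour of the difference survives; otherwise $c'(N[v_1])=c(N[v_1]\setminus\{v\})\cup\{\alpha\}$ cannot equal $c(N[w])$ — by cardinalities if $|c(N[w])\setminus c(N[v_1]\setminus\{v\})|\ge2$, trivially if it is $0$, and because that lone colour lies in $L$ by (c) (so $\alpha$ avoids it) if it is $1$.

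The one nonroutine point is the estimate $|L|\le d_1+2d_2$, i.e.\ that step (c) contributes at most one \emph{new} colour — one outside (a)$\cup$(b) — for each fixed $w\in W$. Suppose $w$ had two neighbours $v_i\ne v_{i'}$ contributing \emph{distinct} new colours $a$ and $b$. By the condition in (c) we have the disjoint unions $c(N[v_i]\setminus\{v\})\cup\{a\}=c(N[w])=c(N[v_{i'}]\setminus\{v\})\cup\{b\}$; since $a\ne b$ this forces $a\in c(N[v_{i'}]\setminus\{v\})$. But every vertex of $N[v_{i'}]\setminus\{v\}$ lies at distance $1$ or $2$ from $v$, so its colour belongs to (a)$\cup$(b), contradicting the newness of $a$. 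Hence step (c) adds at most $d_2$ new colours and $|L|\le(d_1+d_2)+d_2=d_1+2d_2$, as required. I expect this counting step to be the main obstacle; the rest is essentially the bookkeeping of Lemma~\ref{lem:rec}, made cleaner here because the rainbow property of slid-colourings turns several of the relevant containments into statements about cardinalities.
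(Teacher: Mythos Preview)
Your proof is correct and follows the same three-part construction of $L$ as the paper: colours of the neighbours of $v$, colours of the vertices at distance~$2$, and then one extra colour per distance-$2$ vertex to protect the edges $v_iw$. The only difference is in that third step: the paper simply adds, for each $w$ at distance~$2$, an arbitrary colour of $c(N[w])$ not already in $L$ (if any). This makes the bound $|L|\le d_1+2d_2$ immediate---at most one colour per $w$---and renders your counting argument for~(c) unnecessary; the verification that no edge $v_iw$ becomes bad is then just the short dichotomy $c(N[w])\subseteq c(N[v_i])$ versus $c(N[w])\setminus c(N[v_i])\neq\emptyset$. Your more restrictive condition in~(c) works, but it is doing extra bookkeeping for no gain.
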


\begin{proof}
In this case we also need to put in $L$ all the colours of vertices at distance $2$ of $v$.
 We keep the same notations as before and we construct $L$ as follows:
\begin{enumerate}
\item For each vertex $v_i$, add colour $c(v_i)$ to $L$ (at most $d_1$ colours are added).
\item For each vertex $u_{i,j}$, add colour $c(u_{i,j})$ to $L$  (at most $d_2$ colours are added).
\item For each vertex $u_{i,j}$, if there is some colour in $c(N[u_{i,j}])$, but not already in
$L$, then add one of them to $L$. At this step we add at most $d_2$ colours.
\end{enumerate}

In the end, $L$ contains at most $d_1+2d_2$ colours. If we consider a new colouring $c'$ from $c$
where we colour $v$ with a colour not in $L$, then clearly $c'$ is still a distance-two colouring. Furthermore, no edge becomes bad. Indeed, the only edges that could become bad would
be of the form $v_iu_{i,j}$. There are two cases depending on whether $c(N[u_{i,j}])$ is included in $c(N[v_i])$ or not.  

If $c(N[u_{i,j}])\subset c(N[v_i])$, then there is a colour $c_0\in c(N[v_i])\setminus
c(N[u_{i,j}])$. If $c_0$ was the colour $c(v)$, because $c'(v)\notin c(N[u_{i,j}])\subset c(N[v_i])$, we have $c'(v)\in
c'(N[v_i])\setminus c'(N[u_{i,j}])$. Otherwise, we still have $c_0\in c'(N[v_i])\setminus
c'(N[u_{i,j}])$.

Otherwise  $c(N[u_{i,j}])\setminus c(N[v_i])$ is not empty and during the construction of $L$, one colour of $c(N[u_{i,j}])\setminus c(N[v_i])$ has been added to $L$, still separating $v_i$ from $u_{i,j}$.
\end{proof}

Note that Recolouring lemma $2$ can also be applied in the locally identifying colouring case, leading in some cases to a better bound than the one of Recolouring lemma~\ref{lem:rec}.
\bigskip

As before, we can use this lemma to construct a slid-colouring of a graph $G$  by induction. We
first colour $G$ when it is $d$-degenerate:

\begin{Prop}
Let $G$ be a $d$-degenerate graph with maximum degree $\Delta\geq 2$ and $d<\Delta$. Then:
$$\chi_{slid}(G)\leq (\Delta-1)(2\Delta-1)+2d-1.$$
\end{Prop}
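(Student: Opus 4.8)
The plan is to mimic the proof of Proposition~\ref{prop:coldeg} almost verbatim, replacing the first Recolouring Lemma by Recolouring lemma~\ref{lem:rec2} and keeping track of the fact that now we must also respect the distance-two constraint. Fix $\Delta$ and argue by induction on the number of vertices of $G$; for small graphs the bound $(\Delta-1)(2\Delta-1)+2d-1$ exceeds $\Delta^2+1$, so a distance-two colouring with no bad edges trivially exists (any vertex of a small graph sees few vertices). For the inductive step, pick a vertex $v$ of minimum degree $t\le d$, set $H=G\setminus\{v\}$, and obtain by induction an slid-colouring $c$ of $H$ with $(\Delta-1)(2\Delta-1)+2d-1$ colours.

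Next I would recolour the neighbours $v_1,\dots,v_t$ of $v$ so that each $v_i$ gets a colour avoiding a small ``distinguishing'' list $L'$, in analogy with the proof of Proposition~\ref{prop:coldeg}. Here $L'$ should contain, for each $i$, the colour $c(u_{i,1})$ of one neighbour of $v_i$ outside $N[v]$ (when such a vertex exists): this is what will ultimately separate $v$ from $v_i$. To recolour a single $v_i$ using Recolouring lemma~\ref{lem:rec2}, note that in $H$ the vertex $v_i$ has degree at most $\Delta-1$ and at most $(\Delta-1)^2$ vertices at distance exactly $2$, so the forbidden list from the lemma has size at most $(\Delta-1)+2(\Delta-1)^2=(\Delta-1)(2\Delta-1)$; adding the $|L'|\le d$ colours of $L'$ and subtracting the overlap (the colour $c(u_{i,1})$ is counted in both the lemma's list — as a distance-two vertex of $v_i$ in some situations — careful bookkeeping shows one can save one) leaves at least one admissible colour, provided the total palette has size at least $(\Delta-1)(2\Delta-1)+2d-1$. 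One also has to forbid the colours already sitting at distance $\le 2$ from $v_i$ among the previously recoloured $v_j$'s and among $N(v)$, so that the eventual colouring of $G$ stays distance-two proper; these are already among the vertices accounted for, or can be absorbed because $d<\Delta$.

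Then assign to $v$ a brand-new colour never used in $c$. The resulting colouring of $G$ is a proper distance-two colouring: all neighbours and second-neighbours of $v$ have old colours and $c(v)$ is new, while inside $H$ nothing was spoiled by the recolourings (this is guaranteed by Recolouring lemma~\ref{lem:rec2}). The only edges whose ``badness'' status changed are those incident to $v$, i.e.\ the $vv_i$: if $v_i$ has a neighbour $u_{i,1}\notin N[v]$, then $c(u_{i,1})\in c(N[v_i])\setminus c(N[v])$ and the edge is good; otherwise $N[v_i]\subseteq N[v]$ and, $v$ having minimum degree, $N[v_i]=N[v]$, so $v_i$ need not be separated from $v$. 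Finally, as in Proposition~\ref{prop:coldeg}, the single fresh colour at $v$ is wasteful: reapply Recolouring lemma~\ref{lem:rec2} to $v$ itself to repaint it with one of the $(\Delta-1)(2\Delta-1)+2d-1$ colours already present, avoiding the fresh colour — this is feasible as long as $(\Delta-1)(2\Delta-1)+2d-1$ minus the lemma's forbidden-list size (at most $d_1+2d_2\le (\Delta-1)(2\Delta-1)$ for $v$, but actually with $d_1=t\le d$ and $d_2\le d(\Delta-1)$ the bound is smaller) minus one is positive, which a short linear-in-$d$ estimate confirms for $d<\Delta$.

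The main obstacle is the constant bookkeeping: I expect the delicate point to be verifying that recolouring the $v_i$ one after another does not violate the distance-two condition among themselves and with $N(v)$, while simultaneously staying within the budget $(\Delta-1)(2\Delta-1)+2d-1$ — in particular identifying exactly which colour is double-counted so the ``$-1$'' is justified. Everything else is a routine transcription of the arguments already given for $\chi_{lid}$, with the open/closed neighbourhoods replaced by the appropriate distance-two data and the ``separation'' bookkeeping handled by Recolouring lemma~\ref{lem:rec2}.
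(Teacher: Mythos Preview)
Your approach is essentially identical to the paper's: induction on $|V(G)|$, remove a minimum-degree vertex $v$, recolour its neighbours via Recolouring lemma~\ref{lem:rec2} to avoid a list $L'$ of ``separating'' colours, give $v$ a fresh colour, then recolour $v$ to eliminate it. The bookkeeping you flag as delicate is resolved in the paper exactly as you suspect: when recolouring $v_i$, one forbids $(\Delta-1)(2\Delta-1)$ colours from the lemma, at most $d-1$ colours of the other $v_j$'s (to make $N(v)$ rainbow), and at most $d-1$ colours from $L'$ (the overlap being $c(u_{i,1})$, which is already forbidden as a \emph{neighbour} of $v_i$, not a distance-two vertex), for a total of $(\Delta-1)(2\Delta-1)+2d-2$ forbidden colours.
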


\begin{proof}
The idea of the proof is similar to the proof of Proposition~\ref{prop:coldeg}. 
We construct the colouring by induction. We choose a vertex $v$ with minimum possible degree $t\leq d$. Then
$G\setminus \{v\}$ has a slid-colouring  with $(\Delta-1)(2\Delta-1)+2d-1$ colours. 
For each neighbour $v_i$ of $v$ which has a neighbour $u_{i,1}$ at distance $2$ of $v$, we put the colour
of $u_{i,1}$ in a list $L'$.
We recolour each neighbour $v_i$ of $v$ in such a way that all the neighbours of $v$ have different
colours and none of them has a colour in $L'$.
To recolour $v_i$, there are  at most $(\Delta-1)(2\Delta-1)$ forbidden colours from the lemma,
 at most $d-1$ colours from the other neighbours of $v$ and at most $d-1$ forbidden colours from $L'$ (if $v_i$ has a
neighbour $u_{1,1}$ then the colour of $u_{1,1}$ is already forbidden for $v_i$ in the lemma). Therefore, at
most  $(\Delta-1)(2\Delta-1)+2d-2$ are forbidden but we have $(\Delta-1)(2\Delta-1)+2d-1$ colours,
so at least one colour is free.
After that, we colour $v$ with a completely new colour, obtaining a slid-colouring with
$(\Delta-1)(2\Delta-1)+2d$ colours, and by Lemma~\ref{lem:rec2}, we can change the colour of $v$ to a colour already used. We thus obtain a slid-colouring with $(\Delta-1)(2\Delta-1)+2d-1$ colours (for this last step, at least two colours are free so one of them is not the colour of $v$ and we can change the colour of $v$ to a colour already used).
\end{proof}

\begin{corollary}
Let $G$ be a graph with maximum degree $\Delta$. Then:
$$\chi_{slid}(G)\leq  2\Delta^2-\Delta+1.$$
\end{corollary}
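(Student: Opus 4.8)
The plan is to follow the same scheme as the earlier corollary bounding $\chi_{lid}$, but built on the degenerate slid-bound $\chi_{slid}(G)\le(\Delta-1)(2\Delta-1)+2d-1$ just proved and on Recolouring lemma~\ref{lem:rec2}. If $G$ is disconnected we colour the components separately, and if $\Delta\le 1$ the bound is trivial, so assume $G$ connected with $\Delta\ge 2$.

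First I would handle the non-regular case. Exactly as in the $\chi_{lid}$ corollary, connectivity forces a graph with maximum degree $\Delta$ that is not $\Delta$-regular to be $(\Delta-1)$-degenerate (a proper induced subgraph in which every vertex had degree $\Delta$ would have no edges to the rest of $G$). Applying the degenerate proposition with $d=\Delta-1$ then gives $\chi_{slid}(G)\le(\Delta-1)(2\Delta-1)+2(\Delta-1)-1=2\Delta^2-\Delta-2$, well inside the claimed bound.

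So the real work is the $\Delta$-regular case, and here I would imitate the regular case of the $\chi_{lid}$ corollary. Fix a vertex $v$ with neighbours $v_1,\dots,v_\Delta$, and for each $i$ let $u_{i,1},u_{i,2},\dots$ be the neighbours of $v_i$ not adjacent to $v$ (notation of Figure~\ref{fig:notation}). The graph $G\setminus\{v\}$ is $(\Delta-1)$-degenerate, so by the degenerate proposition it has a slid-colouring with $2\Delta^2-\Delta-2$ colours; add two unused colours, so that we have a slid-colouring of $G\setminus\{v\}$ with $2\Delta^2-\Delta$ colours. Now recolour $v_1,\dots,v_\Delta$ in turn, using Lemma~\ref{lem:rec2} each time, so as to guarantee two further properties: (i) the $v_i$ get pairwise distinct colours, and (ii) whenever a vertex $u_{i,1}$ exists, its colour is not among $c(v_1),\dots,c(v_\Delta)$. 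When recolouring $v_i$, its degree in $G\setminus\{v\}$ is at most $\Delta-1$ and it has at most $(\Delta-1)^2$ vertices at distance $2$ there, so Lemma~\ref{lem:rec2} forbids at most $(\Delta-1)+2(\Delta-1)^2=(\Delta-1)(2\Delta-1)$ colours; conditions (i) and (ii) cost at most $(\Delta-1)+(\Delta-1)$ more colours (the colour of $u_{i,1}$ itself is already in the list from the lemma, since $u_{i,1}\in N(v_i)$), for a total of at most $(\Delta-1)(2\Delta+1)=2\Delta^2-\Delta-1$ forbidden colours. Hence one of the $2\Delta^2-\Delta$ colours is always free, and since each step applies Lemma~\ref{lem:rec2} the colouring of $G\setminus\{v\}$ stays a slid-colouring throughout.

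Finally I would put $v$ back with a completely new colour $\alpha$, for $2\Delta^2-\Delta+1$ colours in total, and verify the result is a slid-colouring. Local injectivity and the distance-two condition follow at once: $\alpha$ is new, and the only distance-$\le 2$ pairs created that do not involve $v$ are the pairs $v_i,v_j$, which are distinctly coloured by~(i). For bad edges: every edge not incident to some $v_i$ is untouched; an edge $v_iu_{i,j}$ is automatically good since $\alpha\in c(N[v_i])\setminus c(N[u_{i,j}])$; an edge $v_iv_j$ is good because adding the same new colour $\alpha$ to the two closed neighbourhoods preserves the colour-set inequality inherited from $G\setminus\{v\}$ (and $N[v_i]=N[v_j]$ in $G$ precisely when the same held there); and the edge $vv_i$ is good because $\Delta$-regularity forces either $N[v_i]=N[v]$, in which case nothing needs separating, or the existence of a neighbour $u_{i,1}$ of $v_i$ outside $N[v]$, whose colour lies in $c(N[v_i])\setminus c(N[v])$ by~(ii) and the newness of $\alpha$. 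I expect the only delicate part to be the accounting in the recolouring step that makes everything fit with exactly two spare colours; the rest is a routine transcription of the $\chi_{lid}$ argument into the slid setting.
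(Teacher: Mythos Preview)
Your proof is correct and follows essentially the same approach as the paper: reduce to the $(\Delta-1)$-degenerate case via the previous proposition, and in the $\Delta$-regular case delete a vertex $v$, pad the palette to $2\Delta^2-\Delta$ colours, recolour the neighbours of $v$ via Lemma~\ref{lem:rec2} with the same two extra constraints (pairwise distinctness and avoidance of the $u_{i,1}$ colours), and reinsert $v$ with a fresh colour. Your counting $(\Delta-1)(2\Delta-1)+2(\Delta-1)=2\Delta^2-\Delta-1$ matches the paper's, and you merely supply more detail in the final verification that the resulting colouring is a slid-colouring, which the paper leaves implicit.
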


\begin{proof}
As before, we can assume that $G$ is connected. If $G$ has a vertex of degree $d<\Delta$, then it is $(\Delta-1)$-degenerate and we have
$\chi_{slid}(G)\leq  2\Delta^2-\Delta -2$. Otherwise, $G$ is $\Delta$-regular. If we remove one
vertex $v$, then $G\setminus\{v\}$ is $(\Delta-1)$-degenerate and there is a slid-colouring with
$2\Delta^2-\Delta -2$ colours. We recolour the neighbours of $v$ as before, but here there are $\Delta$
neighbours so we will need $(\Delta-1)(2\Delta-1)+2\Delta-2+1=2\Delta^2-\Delta$ colours. We complete
the colouring by giving a completely new colour to $v$, thus obtaining a slid-colouring with
$2\Delta^2-\Delta+1$ colours.
\end{proof}

We then consider the case of the cycle:

\begin{Prop}
Let $n\geq 4$ be an integer. Let $\mathcal C_n$ be the cycle of order $n$. Then:
\begin{itemize}
\item $\chi_{slid}(\mathcal C_n)=4$ if $n\equiv 0 \bmod 4$,
\item $\chi_{slid}(\mathcal C_n)=6$ if $n=6$ or $11$,
\item $\chi_{slid}(\mathcal C_7)=7$,
\item $\chi_{slid}(\mathcal C_n)=5$ otherwise.
\end{itemize}

As a consequence, any graph with maximum degree $2$ has a slid-colouring with seven colours.
\end{Prop}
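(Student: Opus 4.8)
The plan is to show that, for every $n\ge 4$, a colouring of $\mathcal C_n$ is a slid-colouring exactly when it is a proper colouring of the cube $\mathcal C_n^{3}$ — the graph on the vertices $v_0,\dots,v_{n-1}$ of $\mathcal C_n$ in which two vertices are joined whenever their cyclic distance is $1$, $2$ or $3$ — and then to compute $\chi(\mathcal C_n^{3})$. For the reduction, observe that when $n\ge 4$ each closed neighbourhood $N[v_i]=\{v_{i-1},v_i,v_{i+1}\}$ has three distinct vertices and $N[v_i]\neq N[v_{i+1}]$; hence in any slid-colouring $c$ the sets $c(N[v_i])$ and $c(N[v_{i+1}])$ each have three colours (local injectivity) and share exactly the two colours $c(v_i),c(v_{i+1})$, so the edge $v_iv_{i+1}$ is bad if and only if $c(v_{i-1})=c(v_{i+2})$. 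Together with local injectivity, this is equivalent to requiring $c(v_i)\neq c(v_j)$ whenever $v_i,v_j$ are at cyclic distance at most $3$, i.e.\ that $c$ properly colours $\mathcal C_n^{3}$. Therefore $\chi_{slid}(\mathcal C_n)=\chi(\mathcal C_n^{3})$.

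Next I would determine $\chi(\mathcal C_n^{3})$. For the lower bound, each colour class of a proper colouring of $\mathcal C_n^{3}$ is independent, so consecutive vertices of a given colour are at cyclic distance at least $4$; thus each class has at most $\lfloor n/4\rfloor$ vertices and $\chi(\mathcal C_n^{3})\ge\lceil n/\lfloor n/4\rfloor\rceil$. A short computation shows that this value is $4$ when $4\mid n$, is $7$ when $n=7$, is $6$ when $n\in\{6,11\}$, and is $5$ for every other $n\ge 4$. For a matching upper bound I would split the cyclic sequence $v_0,\dots,v_{n-1}$ into $p=\lfloor n/4\rfloor$ consecutive arcs of as equal lengths as possible (each of length $\lfloor n/p\rfloor$ or $\lceil n/p\rceil$) and assign colour $j$ to the $j$-th vertex of every arc; this uses exactly $\lceil n/p\rceil=\lceil n/\lfloor n/4\rfloor\rceil$ colours, and it is proper for $\mathcal C_n^{3}$ because two equally coloured vertices lie in distinct arcs, and each of the two sides of the cycle joining them fully contains at least one arc, hence has at least $\lfloor n/\lfloor n/4\rfloor\rfloor\ge 4$ vertices (for $p=1$, i.e.\ $n\le 7$, this is simply the rainbow colouring on $n$ colours). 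This yields the four stated values.

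For the consequence, a graph of maximum degree $2$ is a disjoint union of paths and cycles. Every path has a slid-colouring with $4$ colours, namely $1,2,3,4,1,2,3,4,\dots$: it is locally injective, an interior edge $v_iv_{i+1}$ would be bad only if $c(v_{i-1})=c(v_{i+2})$, which is impossible since these two indices differ by $3$, and the end-edges are handled trivially (at an end vertex the closed neighbourhood is strictly smaller than its neighbour's, or the path has at most two vertices). The triangle $\mathcal C_3$ has no bad edge at all and is slid-coloured with $3$ colours, and by the proposition every $\mathcal C_n$ with $n\ge 4$ has a slid-colouring with at most $7$ colours. Colouring the components independently from one common palette of $7$ colours gives a slid-colouring of the whole graph.

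The reduction in the first paragraph is essentially a one-line cancellation, so the main obstacle is the bookkeeping in the second paragraph: confirming that $\lceil n/\lfloor n/4\rfloor\rceil$ collapses to precisely those four cases, that the arc colouring remains proper across arc boundaries, and then settling by hand the small or exceptional lengths $n\in\{4,5,6,7,11\}$ (where the value equals $n$ for $n\le 7$, and $6$ for $n=11$). In effect this amounts to a self-contained computation of the chromatic number of the cube of a cycle.
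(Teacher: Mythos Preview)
Your reduction to proper colourings of $\mathcal C_n^{3}$ is exactly the paper's starting observation, phrased there as ``every four consecutive vertices have different colours''. Where you diverge is in computing the resulting chromatic number. The paper proceeds ad hoc: it exhibits the explicit pattern $(12345)^i(1234)^*$ for the upper bound when $n\equiv i\pmod 4$ and $n\ge 5i$, handles $n\in\{6,7\}$ as complete graphs, and for $n=11$ argues by pigeonhole that five colours force some colour to appear three times, hence twice within distance~$3$. You instead prove the closed formula $\chi(\mathcal C_n^{3})=\lceil n/\lfloor n/4\rfloor\rceil$ uniformly, via an independence-number lower bound and a balanced-arc upper bound. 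Your route is more conceptual and immediately generalises to $\chi(\mathcal C_n^{k})$; the paper's is shorter to write down and requires no formula-checking.

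One point to tighten: your justification that the arc colouring is proper claims that ``each of the two sides of the cycle joining [two equally coloured vertices] fully contains at least one arc''. This is false when the two vertices lie in \emph{adjacent} arcs and $p=2$ (e.g.\ $n=9$, arcs of lengths $4$ and $5$): neither side then contains a full third arc. The conclusion survives because the forward distance between position~$j$ of arc~$a$ and position~$j$ of arc~$a{+}1$ equals $\ell_a\ge\lfloor n/p\rfloor\ge 4$, and the backward distance is $n-\ell_a\ge\ell_{a+1}\ge 4$; you should state it this way rather than via ``contains a full arc''. You already flag the boundary check as the main bookkeeping item, so this is a phrasing fix rather than a gap in the plan.
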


\begin{proof}
A colouring of $\mathcal C_n$ is a slid-colouring if and only if every four consecutive vertices have
different colours.
Then it is clear that $\chi_{slid}(\mathcal C_n)=4$ if  and only if $n\equiv 0 \bmod 4$,
$\chi_{slid}(\mathcal C_6)=6$ and $\chi_{slid}(\mathcal C_7)=7$.

If $n\equiv i \bmod 4$ ($i\neq 0$), then $\chi_{slid}(\mathcal C_n)=5$  using the colouring described by the word $(12345)^i(1234)^*$, if $n\geq 5i$ ($M^i$ means that we repeat the pattern $M$ $i$ times). It remains to consider the case $n=11$. There is no slid-colouring of
$\mathcal C_{11}$ with five colours, otherwise one colour would appear three times, and so two
occurrences of it will be at distance less than $4$. Moreover, $12345123456$ is a
slid-colouring of $\mathcal C_{11}$.

Clearly, a path has a slid-colouring with four colours and so any graph with maximum degree $2$ has a slid-colouring with seven colours.
\end{proof}

Finally, we consider the class of chordal graphs. Chordal graphs are graphs 
where each induced cycle has size at most three. They belong to the class of perfect graphs.
One of their properties (see \cite{BLS99}) is to admit a simplicial order of elimination for vertices: if $G$ is a chordal graph, there is a vertex $v$ whose neighbourhood is a clique (a {\em
simplicial vertex}), and then $G\setminus\{v\}$ is still a chordal graph. For chordal graphs, we have
$\omega(G)=\chi(G)$ where $\omega(G)$ is the {\em clique number} of $G$, i.e. the maximum size of a clique of $G$ (see \cite{BLS99}). In \cite{EGMOP10}, it
is conjectured that $\chi_{lid}(G)\leq 2\omega(G)$, for any chordal graph $G$. We give the first nontrivial bound on
$\chi_{slid}$, and so on $\chi_{lid}$, in terms of parameters $\Delta$ and $\omega$ for chordal graphs, in the direction of the previous conjecture.

\begin{Prop}
Let $G$ be a chordal graph and let $\omega=\omega(G)$. If $\omega\leq \frac{\Delta}{2}+1$, then:
$$\chi_{slid}(G) \leq 2\Delta\omega-2\omega^2+5\omega-2\Delta-2.$$

Otherwise:
$$\chi_{slid}(G) \leq \frac{\Delta(\Delta+1)}{2}+1\leq 2\omega^2-7\omega+7.$$
\end{Prop}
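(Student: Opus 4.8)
The plan is to mimic the induction used for $d$-degenerate graphs, but to exploit the simplicial elimination order of chordal graphs so that each removed vertex $v$ sits in a clique of size at most $\omega$. Concretely, I would induct on the number of vertices of $G$. Pick a simplicial vertex $v$ of degree $t$, so $N(v)$ is a clique and hence $t\le\omega-1$, and set $H=G\setminus\{v\}$, which is again chordal with $\omega(H)\le\omega$. By induction $H$ has a slid-colouring with the target number of colours. The key structural gain is that, since $N(v)$ is a clique, a neighbour $v_i$ of $v$ has relatively few vertices at distance $2$ from $v$ through it that are \emph{not} already neighbours of $v$: the vertices $u_{i,1},\dots,u_{i,s_i}$ range over $N(v_i)\setminus N[v]$, and $|N(v_i)|\le\Delta$ while $|N[v]\cap N(v_i)|\ge t$ (the other neighbours of $v$ are all adjacent to $v_i$), so $s_i\le\Delta-t$. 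This is exactly the saving that makes the bound depend on $\Delta\omega-\omega^2$ rather than $\Delta^2$.

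Next I would run the recolouring step from Recolouring Lemma~2 on each $v_i$, forbidding the colours needed to keep the colouring a slid-colouring after $v$ is inserted, plus (as in Proposition~\ref{prop:coldeg}'s proof) one distinguished colour $c(u_{i,1})$ per neighbour to guarantee separation of the pair $\{v,v_i\}$, plus the colours already chosen for the other neighbours of $v$ to preserve local injectivity on $N[v]$. Here one must recompute the count: applying Recolouring Lemma~2 to $v_i$ in $H$ forbids $d_1+2d_2$ colours where $d_1=\deg_H(v_i)\le\Delta-1$ and $d_2$ is the number of vertices at distance exactly $2$ from $v_i$ in $H$, which is bounded by $(\Delta-1)(\Delta-2)$ in general but can be bounded more sharply using chordality near $v$; then add $t-1\le\omega-2$ colours of the other neighbours and $t-1\le\omega-2$ colours from the list $L'$, with the usual overlap observation that $c(u_{i,1})$ is double-counted. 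Collecting these gives a forbidden set of size at most the target minus one, so a free colour exists for each $v_i$. Finally assign $v$ a brand-new colour: $\{v,v_i\}$ is separated because $c(u_{i,1})$ lies in $c(N[v_i])$ but not in $c(N[v])$ when $u_{i,1}$ exists, and when it does not we have $N[v_i]=N[v]$ (by minimality of simpliciality here one uses that $v_i$'s only neighbours are in the clique $N[v]$), so no separation is required; local injectivity on $N[v]$ holds by construction. As in the earlier proofs one then reapplies Recolouring Lemma~2 to $v$ to replace the fresh colour by an already-used one, provided the colour count exceeds $\deg(v)+2\cdot(\text{dist-}2\text{ count of }v)$, which the stated bound is arranged to satisfy.

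The two regimes in the statement correspond to which of the two bounds on the distance-$2$ neighbourhood of $v_i$ (equivalently, on the number of forbidden colours) is the binding one. When $\omega\le\frac{\Delta}{2}+1$ the clique-based count $s_i\le\Delta-t\le\Delta-1$ combined with the cliques around the $v_j$'s yields the bound $2\Delta\omega-2\omega^2+5\omega-2\Delta-2$; when $\omega$ is large relative to $\Delta$ the graph is ``almost a clique'' locally and the cruder estimate using that every vertex has at most $\binom{\Delta+1}{2}$ vertices within distance $2$ (in a chordal graph, neighbourhoods of neighbours overlap heavily) gives $\chi_{slid}(G)\le\frac{\Delta(\Delta+1)}{2}+1$, and a short calculation using $\Delta\le 2(\omega-1)$ rewrites this as $\le 2\omega^2-7\omega+7$. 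I would therefore split the induction step into these two cases, verify the arithmetic that the number of forbidden colours stays below the claimed total in each, and check the base case (small graphs, where the bound is a small constant and any proper distance-$2$ colouring works).

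The main obstacle I expect is the bookkeeping of the forbidden-colour count in the recolouring step: one has to bound the number of vertices at distance exactly $2$ from $v_i$ \emph{in $H$} carefully, using chordality (the cliques $N[v]$, $N[v_j]$ and the simplicial structure) to avoid the naive $(\Delta-1)(\Delta-2)$ bound, and to make the overlaps (colours counted in both Recolouring Lemma~2's list and in $L'$ or in the neighbour-colours) precise enough that the total comes out to exactly the target minus one. Getting the case threshold $\omega\le\frac{\Delta}{2}+1$ to fall out of this count, rather than being imposed artificially, is the delicate part; everything else is a routine adaptation of the arguments already given for $d$-degenerate graphs.
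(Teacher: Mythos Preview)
Your plan contains an unnecessary step that actually breaks the bound, and it misses the one observation that makes the chordal case easy.

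You propose to recolour every neighbour $v_i$ of the simplicial vertex $v$ via Recolouring Lemma~2, and you hope chordality will let you bound the number $d_2$ of vertices at distance exactly~$2$ from $v_i$ in $H$. It will not. The vertex $v_i$ is in general \emph{not} simplicial, and chordality gives no useful control on its distance-$2$ neighbourhood: already in a tree (chordal with $\omega=2$) a vertex can have $(\Delta-1)(\Delta-2)$ vertices at distance~$2$, so Lemma~2 applied to $v_i$ forbids $\Theta(\Delta^2)$ colours. But for $\omega=2$ the target bound is $2\Delta\cdot 2-2\cdot 4+5\cdot 2-2\Delta-2=2\Delta$, so the recolouring step cannot be carried out. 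Your claim that ``every vertex has at most $\binom{\Delta+1}{2}$ vertices within distance~$2$ in a chordal graph'' is likewise false for the same reason.

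What you are missing is that the recolouring of the $v_i$'s is not needed at all. The existing colouring $c$ of $H$ is a \emph{slid}-colouring, hence a proper distance-$2$ colouring, and $N(v)$ is a clique. Therefore the $v_i$'s already have pairwise distinct colours, and every vertex $u_{i,j}$ at distance~$2$ from $v$ is at distance at most~$2$ from \emph{every} $v_k$, so $c(u_{i,j})$ is already different from all colours in $N(v)$. Consequently, giving $v$ a brand-new colour yields a slid-colouring of $G$ immediately: the only new edges to check are $vv_i$, and either $v_i$ has some neighbour $u_{i,1}\notin N[v]$ whose colour lies in $c(N[v_i])\setminus c(N[v])$, or $N[v_i]=N[v]$. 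The whole proof then reduces to the single application of Lemma~2 to $v$ that you already identified: with $d=\deg(v)\le\omega-1$ and at most $d(\Delta-d)$ vertices at distance~$2$ from $v$, the forbidden list has size at most $d(2\Delta-2d+1)$, and maximising this over $1\le d\le\omega-1$ is exactly what produces the two regimes in the statement.
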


\begin{proof}
Let $\omega=\omega(G)$ and $\Delta$ be fixed. 
Let $M(\omega,\Delta)=\max_{1\leq d \leq \omega-1}\{d(2\Delta-2d+1)\}$. The function $d\to 2(2\Delta-2d+1)$ is maximized for $d=\tfrac{\Delta}{2}$. Using this fact, 
$M(\omega,\Delta)$ is equal to
$2\Delta\omega-2\omega^2+5\omega-2\Delta-3$ if $\omega-1\leq \tfrac{\Delta}{2}$ and to $\tfrac{\Delta(\Delta+1)}{2}$
otherwise. If $\omega-1>\tfrac{\Delta}{2}$, we clearly have $\tfrac{\Delta(\Delta+1)}{2}\leq 2\omega^2-7\omega+6$. Hence it is enough to prove that $\chi_{slid}(G) \leq M(\omega,\Delta)+1$.

We prove by induction on the number of vertices that any chordal graph with clique number at most
$\omega$ and maximum degree at most $\Delta$ has a slid-colouring with $M(\omega,\Delta)+1$ colours.
 It is clearly true for small graphs.
Let $G$ be a chordal graph with clique number at most $\omega$ and maximum degree at most $\Delta$. Let $v$ be a simplicial vertex of
$G$. By induction, let $c$ be a slid-colouring of $G\setminus\{v\}$ with $M(\omega,\Delta)+1$ colours.
Necessarily, all the vertices of $N(v)$ have different colours, and all the vertices at distance $2$ of $v$ have
colours different from colours of $N(v)$ because they are at distance at most $2$ of any vertex of $N(v)$.
Let $c'$ be the colouring of $G$ extending $c$ and giving to $v$ a completely new colour.
Then $c'$ is a slid-colouring of $G$ with $M(\omega,\Delta)+2$ colours.
Let $d\leq \omega-1$ be the degree of $v$. Then $v$ has at most $d(\Delta-d)$ vertices at distance
$2$. Observe now that $d+2d(\Delta-d)=d(2\Delta-2d+1)\leq M(\omega,\Delta)$ (by definition of $M(\omega,\Delta)$). By Lemma~\ref{lem:rec2}, we can recolour the vertex
$v$ with a colour already used and thus obtain a slid-colouring with $M(\omega,\Delta)+1$ colours.
\end{proof}


\begin{thebibliography}{1}

\bibitem{BRS03}
P.N. Balister, O.M. Riordan, and R.H. Schelp.
\newblock Vertex-distinguishing edge-colorings of graphs.
\newblock {\em J. Graph Theory}, {\bf 42}:95--109, 2003.

\bibitem{BM08} A.~Bondy and U.~S.~R.~Murty.
\newblock {\em Graph Theory}.
\newblock Springer, 3rd edition, 2008.


\bibitem{BLS99} A.~Brandst\"adt, V.B.~Le, and J.P.~Spinrad. 
\newblock {\em Graph Classes: A Survey.}
\newblock SIAM Monographs on Discrete Mathematics and
    Applications, 1999.



\bibitem{BS97}
A.C. Burris and R.H. Schelp.
\newblock Vertex-distinguishing proper edge-colorings.
\newblock {\em J. Graph Theory}, {\bf 26}:73--83, 1997.

\bibitem{CEHSZ02} G. Chartrand, D. Erwin, M.A. Henning, P.J. Slater and P. Zhang.
\newblock The locating-chromatic number of a graph.
\newblock {\em Bull. Inst. Combin. Appl.}, {\bf 36}:89-101, 2002. 

\bibitem{CHS96}
J.~Cern\'y, M.~Hor\v{n}\'ak, and R.~Sot\'ak.
\newblock Observability of a graph.
\newblock {\em Math. Slovaca}, {\bf 46}(1):21--31, 1996.

\bibitem{EGMOP10}
L.~Esperet, S.~Gravier, M.~Montassier, P.~Ochem and A.~Parreau.
\newblock Locally identifying colourings of graph.
\newblock {\em Submitted} in $2010^+$, available on arXiv at \url{http://arxiv.org/abs/1010.5624}.

\bibitem{KCL98} M.~G.~Karpovsky, K.~Chakrabarty, and L.~B.~Levitin. 
\newblock On a new class of codes for identifying vertices in graphs. \
\newblock \emph{IEEE Trans. Inform. Theory}, 44:599--611, 1998.

\bibitem{LW70} D.~R.~Lick and A.~R.~White.
\newblock $k$-degenerate graphs. 
\newblock \emph{Canad. J. Math.}, 22:1082-1096, 1970.

\bibitem{lobs}
A. Lobstein, Identifying and locating-dominating codes in graphs, a bibliography, published electronically at \url{http://perso.enst.fr/~lobstein/debutBIBidetlocdom.pdf}.

\end{thebibliography}
\end{document}